\documentclass[twoside]{aiml20}

\usepackage{aiml20macro}

\usepackage{graphicx}
\usepackage{amsmath}
\usepackage{amssymb,latexsym,color,dsfont,enumitem}
\usepackage{geometry}
\usepackage{tikz}
\usepackage{pgf}

\def\BF{{\bf F}}

\def\BT{{\bf T}}
\def\BZ{{\bf Z}}
\def\bsq{\blacksquare}

\def\Di{\Diamond}
\def\fo{\forall}

\def\ri{\rightarrow}
\def\BK{{\bf K}}
\def\BQ{{\bf Q}}
\def\BC{{\bf C}}
\def\sqr{\square}

\def\bLa{{\bf \Lambda}}
\def\we{\wedge}

\def\sem0{\,_{\_\!\_\!\_}\!|\,}
\def\sem{\,_{\_\!\_\!\_}\!|\,\bS5}
\def\rtt{\rightthreetimes}

\def\bSL4{{\bf SL4}}

\def\<{\langle}
\def\>{\rangle}

\def\a1an{{a_1,\dots,a_n}}

\def\b1{\Box^{-1}}
\def\B1Bk{{B_1,\dots,B_k}}

\def\bbK4{{\bf K4}}
\def\bbD4{{\bf D4}}
\def\bbS4{{\bf S4}}

\def\bC{{\bf C}}

\def\bD{{\bf D}}
\def\bD4{{\bf D4}}

\def\bGL{{\bf GL}}

\def\bLa{{\bf \Lambda}}

\def\bQS4{{\bf QS4}}

\def\bS5{{\bf S5}}
\def\BS5{{\bf S5}}

\def\bvee0{\bigvee}
\def\bvee{\boldsymbol{\vee}}

\def\C12{{\cal C}_1\t{\cal C}_2}

\def\d1{\Diamond^{-1}}
\def\da1aian{{a_1,\dots,a_i,\dots,a_n}}
\def\da1ban{{a_1,\dots,b,\dots,a_n}}
\def\da1n{{a_1,\dots,a_n}}

\def\Di{\Diamond}

\def\dS5{{\text{{\bf {\em S5}}}}}

\def\dx1n{\tx{\hbox{$x_1,\dots,x_n$}}}
\def\dx1yxn{{x_1,\dots,y,\dots,x_n}}

\def\fo{\forall}

\def\fr2{{\text{\scriptsize{\frame{2}}}}}

\def\mo{\vDash}

\def\p1pk{{p_1,\dots,p_k}}

\def\qed{\rule{6pt}{6pt}}
\def\r1rn{{R_1,\dots,R_n}}
\def\r1rN{{R_1,\dots,R_N}}

\def\ri{\rightarrow}
\def\rr1n{{r_1,\dots,r_n}}

\def\s1sn{{S_1,\dots,S_n}}
\def\sbq{\subseteq}

\def\sqr{\square}

\def\t{\times}

\def\ti{\times}

\def\tx{\text}

\def\we{\wedge}

\def\x1xn{{x_1,\dots,x_n}}
\def\xx1n{{x,x_1,\dots,x_n}}

\def\l1{\L}

\def\Ri0{{R_{i_0}}}

\def\Rj0{{R_{j_0}}}

\def\vapi0{{\varphi_{i_0}}}

\def\vapj0{{\varphi_{j_0}}}

\def\lrarr{{\leftrightarrow}}

\def\r1rN{{R_1,\dots,R_N}}

\newlength{\JaneDepth}
\newlength{\JaneWidth}

\newcommand{\jane}[1]{%
  \settodepth{\JaneDepth}{\ensuremath{#1}}%
  \addtolength{\JaneDepth}{.4ex}%
  \settowidth{\JaneWidth}{\ensuremath{#1}}%
  \addtolength{\JaneWidth}{.75ex}%
  \ensuremath{%
    \mathop{%
      \makebox[0pt][l]{%
      \rule[-\the\JaneDepth]{.4pt}{.5ex}%
      \rule[-\the\JaneDepth]{\the\JaneWidth}{.4pt}%
      \rule[-\the\JaneDepth]{.4pt}{.5ex}}%
      \makebox[\JaneWidth][l]{%
        \hspace{.5ex}\ensuremath{#1}}%
      }%
  }%
}

\def\lastname{Shehtman, Shkatov}

\begin{document}
{
\binoppenalty = 10000
\relpenalty   = 10000

\renewcommand{\leftheadline}{\ifnum\value{page}=\hypergetpageref{FirstPage}\firstheadline
      \else\eightrm \thepage\quad\hfill\ifshttl\shtitle\else{Some prospects for semiproducts and products of modal logics}\fi\hfill\fi}

\begin{frontmatter}

  \title{Some prospects for semiproducts and products of modal logics}

  \author{Valentin Shehtman} \address{Institute for Information
    Transmission Problems, Russian Academy of Sciences \\ National
    Research University Higher School of Economics \\
    Moscow State University, Moscow, Russia}

  \author{Dmitry Shkatov}

  \address{School of Computer Science and Applied Mathematics, \\
    University of the Witwatersrand, Johannesburg, South Africa}

  \begin{abstract}
    We consider products and semiproducts of propositional modal
    logics $\bLa$ with $\bS5$ and present new examples of product and
    semiproduct logics axiomatized in the `minimal' way and enjoying
    the product (or semiproduct) FMP. An essential part of the proof
    is local tabularity of these (semi)products for $\bLa$ of finite
    depth; it is obtained by using bisimulation games. These results
    readily imply decidability for 1-variable fragments of predicate
    modal logics $\BQ\bLa$ and $\BQ\bLa$+Barcan formula.  We also
    present new counterexamples, i.e. (semi)products not axiomatizable
    in the simplest way.
  \end{abstract}

  \begin{keyword}
    modal logic, 1-variable fragment, product of modal logics,
    bisimulation game, finite model property
  \end{keyword}
\end{frontmatter}

\section{Introduction}
Semiproducts and products are special types of combined modal
logics. Their systematic investigation began in the 1990s, notably due
to connections with other areas of logic, both pure and applied,
cf. \cite{GKWZ}.  Nowadays the field has become even more interesting
and intriguing; for an overview of some developments cf. \cite{K08}.
In this note we are especially interested in (semi)products with
$\bS5$, due to their interpretation in modal
predicate logic translating the $\bS5$-necessity into the universal
quantifier.

One of the starting points in the study of products was the
``product-matching'' theorem (\cite{GKWZ}, Theorem 5.9) --- the
product of two Kripke complete Horn axiomatizable logics is
axiomatized in the minimal way. A similar result for semiproducts
(``semiproduct-matching'') is known for particular cases only (ibid.,
Theorem 9.10).  Here we present some new positive examples --- Horn
axiomatizable logics that are semiproduct-matching with $\bS5$ and
have the product finite model property (FMP). This implies
decidability and the FMP for corresponding 1-variable modal predicate
logics.

We also present new counterexamples --- two infinite families of logics not semiproduct-matching with $\bS5$. In particular, we show that
Horn axiomatizable complete logics may not be semiproduct-matching.

\section{Preliminaries}
We consider normal monomodal predicate logics, as defined in
\cite{GSS}, in a signature with predicate letters only.  A logic is a
set of formulas containing standard first-order axioms and the axiom
of $\BK$ and closed under standard rules (including predicate
substitution).  The minimal predicate extension of a propositional
monomodal logic $\bLa$ is denoted by $\BQ\bLa$; $\BQ\bLa\BC$ denotes
$\BQ\bLa+\forall x\, \sqr\, P(x) \ri \sqr\, \forall x\, P(x)\mbox{
  (the Barcan axiom)}.$

Formulas constructed from a single variable $x$ and monadic predicate
letters are called {\em 1-variable}. Formulas in which every
subformula of the form $\sqr B$ contains at most one parameter are
called {\em monodic} \cite{GKWZ}.

\begin{lemma}\label{L11}
  Every monadic monodic formula with at most one parameter is
  equivalent to a 1-variable formula in $\BQ\BK$.
\end{lemma}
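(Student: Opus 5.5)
The plan is induction on the construction of the formula, using classical first-order equivalences valid in $\BQ\BK$ together with the $\BK$-theorems for $\sqr$. Call a formula \emph{good} if it is monadic, monodic and has at most one parameter. I will prove that every good formula $A$ with parameter among $\{y\}$ is $\BQ\BK$-equivalent to $B(y)$, where $B(x)$ is a 1-variable formula. Renaming the bound variable and using predicate substitution, we may take $y=x$. The key observation is this: a good formula, read at the top (non-modal) level, is a first-order monadic formula whose "atoms" are either monadic atoms $P(z)$ or modal subformulas $\sqr C$. Each such $\sqr C$ has at most one parameter.

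First, I would strengthen the induction hypothesis to cover all monodic monadic formulas. Every monadic monodic formula $A$, with any parameters, is equivalent to a Boolean combination of the following pieces:
\begin{itemize}
\item atoms $P(z)$;
\item formulas $D(z)$, where $z$ is a parameter of $A$ and $D(x)$ is 1-variable;
\item closed 1-variable sentences.
\end{itemize}
The induction is on the modal depth of $A$. Each modal subformula $\sqr C$ of $A$ that is not in the scope of another $\sqr$ has at most one free variable $z$. By monodicity, $C$ is itself monodic with at most one parameter, so by the induction hypothesis $C$ is equivalent to $C'(z)$ with $C'(x)$ 1-variable. Replacement of equivalents holds in $\BQ\BK$, since $\BK$ gives $\sqr(C\lrarr C')$ from $C\lrarr C'$. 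Hence $\sqr C$ is equivalent to $\sqr C'(z)$, a 1-variable formula in $z$.

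Second, after these replacements, $A$ becomes a classical first-order formula over "monadic letters" $Q(z)$. Here each $Q$ is either a predicate letter or an abbreviation for a 1-variable formula $D(x)$, possibly with $D$ closed, acting as a 0-ary letter. Now I apply the classical fact that monadic first-order logic admits this normal form: every formula is equivalent to a Boolean combination of atoms $Q(z)$ with $z$ free and sentences of the form $\exists x\, \beta(x)$, where $\beta$ is a Boolean combination of atoms in $x$. The proof is the standard quantifier-pushing argument. Put the matrix of $\exists v$ into disjunctive normal form and distribute $\exists v$ over the disjunction. Then pull out the conjuncts not containing $v$, using $\exists v(\alpha\wedge\gamma(v))\lrarr \alpha\wedge\exists v\,\gamma(v)$. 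Finally rename $v$ to $x$. These equivalences are first-order valid, hence theorems of $\BQ\BK$. They remain valid after substituting 1-variable modal formulas for the letters, because $\BQ\BK$ is closed under predicate substitution.

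Third, when $A$ has at most one parameter $y$, every remaining piece is either a formula in $y$ or a closed 1-variable sentence. Renaming $y$ to $x$ then yields a 1-variable formula, which completes the induction. The main obstacle is bookkeeping: the substitution of modal formulas for monadic letters must be legitimate and must not clash with variables. Predicate substitution in $\BQ\BK$ handles this provided bound variables are renamed apart. This renaming is always possible here, because each substituted formula has only the single variable $x$, which can be renamed freely.
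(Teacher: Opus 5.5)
The paper states this lemma without proof, treating it as a standard fact, so there is no argument of the authors' to compare with. Your proof is correct and follows the standard route. You induct on modal depth and replace each outermost $\sqr C$ by an equivalent $\sqr C'(z)$ via necessitation and $\BK$. You then push quantifiers in the Behmann style for equality-free monadic first-order logic, and transfer the result to the modal language by predicate substitution; this is clean because each substituted $\sqr C'(x)$ contains no variable other than $x$.

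The only slip is cosmetic. Moving the parameter from $y$ to $x$ is renaming a free variable (generalization followed by instantiation), not predicate substitution.
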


In turn, every monomodal
 1-variable formula $A$ translates into a bimodal
propositional formula $A_*$ with modalities $\sqr$ and $\blacksquare$,
if every atom $P_i(x)$ is replaced with a proposition letter $p_i$ and
every quantifier $\fo x$ with $\blacksquare$.  The {\em 1-variable
  fragment} of a predicate logic $L$ is the set
$$L\!-\!1:=\{A_*\mid A\in L,~ A\mbox{ is 1-variable}\}.$$

For a modal predicate logic $L$, we have the
following:

\begin{lemma}\label{L12}
  $L\!-\!1$ is a bimodal
  propositional logic containing $\BK\sem$.
\end{lemma}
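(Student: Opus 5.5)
We need to show that $L\!-\!1$ contains all classical tautologies, contains the $\BK$-axioms for $\sqr$ and the $\bS5$-axioms for $\blacksquare$, and is closed under modus ponens, substitution and necessitation for both modalities. Here $\BK\sem$ is the fusion $\BK * \bS5$ with $\sqr$ the $\BK$-box and $\blacksquare$ the $\bS5$-box. The basic tool is that ${}_*$ is a bijection between 1-variable formulas (in the variable $x$, with monadic letters) and bimodal formulas: each $p_i$ goes back to $P_i(x)$ and $\blacksquare$ goes back to $\fo x$. We write $B^*$ for the inverse translation. So for each bimodal formula $B$ it suffices to check whether $B^*\in L$.

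\emph{Axioms.} Since $L$ contains classical first-order logic and the $\BK$ axiom, it contains $T^*$ for every tautology $T$ and $(\sqr(p\ri q)\ri(\sqr p\ri\sqr q))^*$. For $\blacksquare$ we need the following, and each is a first-order validity with a single variable:
\begin{itemize}
\item $\blacksquare(p\ri q)\ri(\blacksquare p\ri\blacksquare q)$, which becomes $\fo x(P\ri Q)\ri(\fo xP\ri\fo xQ)$;
\item $\blacksquare p\ri p$, which becomes $\fo xP(x)\ri P(x)$;
\item $\blacksquare p\ri\blacksquare\blacksquare p$, which becomes $\fo xP\ri\fo x\fo xP$ (vacuous quantification);
\item $\neg\blacksquare p\ri\blacksquare\neg\blacksquare p$, which becomes $\neg\fo xP\ri\fo x\neg\fo xP$ ($x$ is not free in the antecedent).
\end{itemize}
All of these hold in first-order logic, and hence in $L$, even when $P$ is replaced by an arbitrary modal subformula. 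This is because the first-order axioms are schematic over all formulas of the modal language.

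\emph{Rules.} Closure under modus ponens and under $\sqr$-necessitation is immediate, because ${}_*$ commutes with $\ri$ and $\sqr$ and $L$ is closed under these rules. Closure under $\blacksquare$-necessitation follows from generalization: if $A\in L$ then $\fo xA\in L$. The only delicate point is substitution. Let $B\in L\!-\!1$ and let $\sigma$ send each $p_i$ to a bimodal $C_i$. Then $(\sigma B)^*$ is obtained from $B^*$ by substituting $C_i^*$ (a 1-variable formula with free variable $x$) for $P_i(x)$, that is, by the predicate substitution $P_i(y)\mapsto C_i^*[y/x]$. We must check that no variable capture occurs. In $B^*$ every occurrence of $P_i$ has argument $x$, and the only bound variable is $x$, so substituting $C_i^*$ reproduces exactly $(\sigma B)^*$. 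Predicate substitution as defined in \cite{GSS} is allowed precisely for such formulas, up to renaming of bound variables. Hence $(\sigma B)^*\in L$.

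This substitution step is the main obstacle, and only technically so. It requires citing the exact form of the substitution rule in \cite{GSS} and verifying that the translation commutes with it. The other steps are routine verifications.
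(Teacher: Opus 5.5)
Your checks of the $\bS5$-axioms for $\bsq$, of the three rules, and of substitution are fine. The gap is that you misread the target logic. In the paper $\bLa\sem$ is \emph{not} the fusion; it is defined as $\bLa*\bS5+\sqr\bsq p\ri\bsq\sqr p$. So $\BK\sem$ has one more axiom, which you never check, and as written your argument only gives $\BK*\bS5\sbq L\!-\!1$. This axiom is the only one in which $\sqr$ and the quantifier interact, and it cannot be taken for granted. Its mirror image $\bsq\sqr p\ri\sqr\bsq p$ (the Barcan formula, the extra axiom of $[\bLa,\bS5]$) is not in $\BQ\BK\!-\!1$, which is exactly why the paper passes to $\BQ\bLa\BC$ for products.

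You must add that the preimage of $\sqr\bsq p\ri\bsq\sqr p$ lies in every modal predicate logic $L$. That preimage is the converse Barcan formula $\sqr\fo x P(x)\ri\fo x\sqr P(x)$, and it is already derivable in $\BQ\BK$:
\begin{enumerate}
\item From the first-order axiom $\fo xP(x)\ri P(x)$, $\sqr$-necessitation and the $\BK$-axiom give $\sqr\fo xP(x)\ri\sqr P(x)$.
\item Generalization gives $\fo x(\sqr\fo xP(x)\ri\sqr P(x))$.
\item Since $x$ is not free in $\sqr\fo xP(x)$, the quantifier law $\fo x(A\ri B)\ri(A\ri\fo xB)$ for $x$ not free in $A$ (schematic over modal $A,B$) yields $\sqr\fo xP(x)\ri\fo x\sqr P(x)$.
\end{enumerate}
With this step, your closure properties show that $L\!-\!1$ is a normal bimodal logic containing all axioms of $\BK\sem$, hence containing $\BK\sem$. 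This axiom is the predicate counterpart of expanding domains, i.e.\ of the condition $R_h(W)\sbq W$ defining semiproducts. It is therefore the substantive part of the lemma; the substitution step you flagged as the main obstacle is comparatively routine.
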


\begin{definition}
  The {\em product} of frames $F_1=(U_1,R_1),~F_2=(U_2,R_2)$ is\\
  $F_1\ti F_2:=(U_1\ti U_2,R_h,R_v)$, where
$$R_h(u,v)=R_1(u)\ti\{v\},~R_v(u,v)=\{u\}\ti R_2(v).$$
A {\em semiproduct} of $F_1$ and $F_2$ is
a subframe $(F_1\ti F_2)|W$ where $R_h(W)\sbq W$.
\end{definition}

Consider a monomodal propositional logic $\bLa$ (in the language with $\sqr$)
and $\bS5$ (in the language with $\bsq$). Put 
$$\bLa\,_{\_\!\_}\!|\,\bS5:=\bLa*\bS5+\sqr\bsq p\ri\bsq\sqr p, \quad
[\bLa,\bS5]:=\bLa\,_{\_\!\_}\!|\,\bS5+\bsq \sqr p\ri\sqr\bsq p,
$$
where $*$ denotes fusion. 

\begin{definition}
  The {\em product} $\bLa\ti \bS5$ is the logic of the class of all
  products of $\bLa$-frames with $\bS5$-frames. Similarly, the {\em
    semiproduct} $\bLa\rightthreetimes \bS5$ is the logic of the class
  of all semiproducts of such frames.
\end{definition}
In both cases, instead of arbitary $\bS5$-frames one can use single
clusters.

\begin{definition}
  The {\em Kripke-completion} $\overline{L}$ of a modal predicate
  logic $L$ is the logic of the class of all predicate Kripke frames
  validating $L$.
\end{definition}

\begin{lemma}
\begin{enumerate}
\item
$\bLa\sem\sbq \BQ\bLa\!-\!1\sbq\overline{\BQ\bLa}\!-\!1=\bLa\rightthreetimes \bS5$.
\item
$[\bLa,\bS5]\sbq \BQ\bLa\BC\!-\!1\sbq\overline{\BQ\bLa\bC}\!-\!1=\bLa\ti\bS5$.
\end{enumerate} 
\end{lemma}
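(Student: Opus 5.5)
The proof splits into three inclusions for each item, and I treat item 1 in detail; item 2 is analogous.

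\emph{First inclusion}, $\bLa\sem\sbq\BQ\bLa\!-\!1$. By Lemma~\ref{L12}, $\BQ\bLa\!-\!1$ is a bimodal logic containing $\BK\sem$, so it is closed under the rules of $\BK$ for both boxes and under substitution. It therefore suffices to check that the translations of the remaining axioms of $\bLa\sem$ are 1-variable theorems of $\BQ\bLa$.
\begin{itemize}
\item The $\bS5$ axioms for $\bsq$ become classical quantifier laws such as $\fo x P(x)\ri P(x)$ and $\neg\fo x P(x)\ri\fo x\neg\fo x P(x)$, which hold in first-order logic.
\item Each axiom $A(p_1,\dots,p_k)$ of $\bLa$ corresponds to $A(P_1(x),\dots,P_k(x))$, a substitution instance of a $\bLa$-theorem, hence in $\BQ\bLa$.
\item The connecting axiom $\sqr\bsq p\ri\bsq\sqr p$ becomes $\sqr\fo x P(x)\ri\fo x\sqr P(x)$, the converse Barcan formula. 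It is derivable in $\BQ\BK$: from $\fo xP(x)\ri P(x)$, monotonicity gives $\sqr\fo xP(x)\ri\sqr P(x)$, and generalization finishes.
\end{itemize}
For item 2 we additionally note that $\bsq\sqr p\ri\sqr\bsq p$ translates exactly to the Barcan axiom, which is in $\BQ\bLa\BC$.

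\emph{Middle inclusion.} This is immediate, since $L\sbq\overline L$ for every $L$ and taking 1-variable fragments is monotone.

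\emph{Equality.} The main work is to show $\overline{\BQ\bLa}\!-\!1=\bLa\rightthreetimes\bS5$. I would set up a correspondence between predicate Kripke frames and semiproducts.
\begin{itemize}
\item A predicate Kripke frame $(F,D)$ with $F=(U,R)\models\bLa$ and expanding domains ($uRu'\Rightarrow D_u\sbq D_{u'}$) validates $\BQ\bLa$. Its set $W=\{(u,a)\mid a\in D_u\}$ inside $F\ti$(the cluster on $\bigcup_u D_u$) is closed under $R_h$, so $W$ carries a semiproduct.
\item Monadic valuations $\xi(P_i,u)\sbq D_u$ correspond exactly to propositional valuations on $W$. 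By induction on 1-variable formulas, $u\models A[a]$ iff $(u,a)\models A_*$, with $\fo x$ matching $R_v$ restricted to $W$ and $\sqr$ matching $R_h$ because domains expand. Conversely, every semiproduct arises from such a frame by setting $D_u=\{a\mid(u,a)\in W\}$.
\end{itemize}
Two frame facts remain. (i) Predicate frames validating $\BQ\bLa$ are exactly those whose underlying propositional frame validates $\bLa$, given expanding domains. (ii) Validity of a 1-variable $A$ in all such frames is equivalent to validity of $A_*$ in all semiproducts. Fact (ii) follows from the correspondence, since a closed 1-variable formula with free $x$ is valid iff it is valid under all assignments of $x$. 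For item 2 the Barcan formula forces constant domains, so $W$ is the full product $F\ti$cluster, and the same argument yields $\bLa\ti\bS5$.

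I expect the main obstacle to be fact (i): showing that a predicate Kripke frame validates $\BQ\bLa$ iff its propositional base validates $\bLa$. The direction ``base validates $\bLa$ implies the frame validates substitution instances with predicate formulas'' is the standard soundness of $\BQ\bLa$ over Kripke frames (as in \cite{GSS}), so I would cite that result. The converse direction is easy, by substituting sentence-like 0-ary formulas or $\fo x P(x)$-type formulas that mimic propositional letters.
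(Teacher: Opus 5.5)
Your proposal is correct and follows the standard argument the paper takes for granted. The paper states this lemma without proof, relying on the usual correspondence between predicate Kripke frames with expanding (resp.\ constant) domains and semiproducts (resp.\ products) with a cluster, together with routine syntactic checks for the first two inclusions.

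Two details in the equality part each need a one-line repair.

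\begin{itemize}
\item Under the paper's definition, a semiproduct $(F\times C)|W$ need not meet every set $\{u\}\times C$. So your $D_u=\{a\mid (u,a)\in W\}$ may be empty, which predicate Kripke frames do not allow, and ``every semiproduct arises from such a frame'' fails as stated. The fix is to restrict $F$ to $U'=\{u\mid D_u\neq\emptyset\}$. This set is upward closed because the $D_u$ expand, so $F|U'$ is a generated subframe of $F$. Hence $F|U'$ still validates $\bLa$, and it gives the same semiproduct.
\item In item 2, the Barcan formula forces only \emph{locally} constant domains ($uRu'\Rightarrow D_u=D_{u'}$), so $W$ need not be the full product $F\times C$. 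Pass to point-generated subframes: validity in $(F,D)$ is equivalent to validity in all of them. On each such subframe the domains are genuinely constant, and the associated frame is a product of a $\bLa$-frame with a cluster.
\end{itemize}
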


\begin{definition}
  $\bLa$ and $\bS5$ are called {\em semiproduct-matching} if
  $\bLa\sem=\bLa\rtt\bS5$ and {\em product-matching} if
  $[\bLa,\bS5]=\bLa\ti\bS5$.

  $\bLa$ is called {\em quantifier-friendly}, if
  $\BQ\bLa\!-\!1=\bLa\sem$, and {\em Barcan-friendly}, if
  $\BQ\bLa\BC\!-\!1=\bLa\ti\bS5$.
\end{definition}

So $\bLa$ is quantifier-friendly (respectively, Barcan-friendly)
whenever $\bLa$ and $\bS5$ are semiproduct-matching (respectively, product-matching).

\begin{theorem}\label{T28} (cf. \cite{GKWZ}, Theorem 5.9).  If $\bLa$ is Kripke
  complete and Horn axiomatizable, then $\bLa$ and $\bS5$ are
  product-matching.
\end{theorem}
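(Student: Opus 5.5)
The plan follows the standard route of Gabbay--Kurucz--Wolter--Zakharyaschev: show that $[\bLa,\bS5]$ is Kripke complete with respect to a class of frames each of which is a p-morphic image of a disjoint union of products of $\bLa$-frames with $\bS5$-clusters. Since $[\bLa,\bS5]\sbq\bLa\ti\bS5$ is immediate (the axioms of $\bLa$ and $\bS5$, commutativity and Church--Rosser hold in products), it suffices to show that every formula $A\notin[\bLa,\bS5]$ is refuted in some product $F\ti C$ with $F\models\bLa$ and $C$ a cluster.

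First we use the canonical model $M$ of $[\bLa,\bS5]$. Every Horn formula is Sahlqvist, so $\bLa$ is canonical, as are the $\bS5$ axioms, commutativity $\sqr\bsq p\ri\bsq\sqr p$, $\bsq\sqr p\ri\sqr\bsq p$, and Church--Rosser. Thus the canonical frame $(W,R_h,R_v)$ has $R_v$ an equivalence, $R_h$ satisfying the Horn first-order conditions of $\bLa$, and $R_h,R_v$ commuting with the Church--Rosser property. Restrict to the generated subframe from a point refuting $A$. Kripke completeness of $\bLa$ is in fact only needed in the form that the first-order Horn conditions characterize $\bLa$; since $\bLa$ is Horn axiomatizable we may take $\bLa$ to be the logic of its first-order Horn class.

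The main step is to represent this commuting Church--Rosser frame $G=(W,R_h,R_v)$ as a p-morphic image of a product. Take $C$ to be a cluster whose points are ``choice functions'': define $U$ as the set of functions $f$ from the $R_v$-class structure such that $f$ picks compatible points along $R_h$. Concretely, I would follow the GKWZ construction: let $F=(U_1,S)$ where $U_1$ is the set of $R_v$-equivalence classes ($[w]\,S\,[u]$ iff $w R_h u'$ for some $u'R_v u$), and let $C$ be the set of all maps $g:U_1\to W$ with $g(X)\in X$ and such that $X S Y$ implies $g(X) R_h g(Y)$ (``horizontal threads''). The map $(X,g)\mapsto g(X)$ is the candidate p-morphism from $F\ti C$ onto $G$. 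The $R_v$-back condition needs that every point of every class lies on a thread, and the $R_h$-back condition needs that a thread through $w$ can be modified to pass through any $R_h$-successor of $w$; both are proved by step-by-step extension using commutativity and Church--Rosser (and a Zorn/transfinite argument, or working in the unravelled tree version of $G$ so that threads can be defined by recursion along a tree). This is where I expect the obstacle: to make threads extend coherently one should first unravel $G$ horizontally into a frame whose $R_h$-reduct is a forest, which preserves commutation and Church--Rosser and, crucially, we need $F$ still to validate $\bLa$.

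That last point is exactly where Horn axiomatizability enters: Horn first-order conditions are preserved under the relevant constructions, in particular $F$ is a p-morphic image of (the quotient by $R_v$ of) $G$ in a Horn-preserving way, and universal Horn sentences are preserved by products and substructures, so one checks $F\models\bLa$ directly from the Horn conditions on $R_h$ together with commutativity. Given this, $A$ is refuted in $F\ti C$, a product of a $\bLa$-frame with an $\bS5$-frame, so $A\notin\bLa\ti\bS5$, proving $[\bLa,\bS5]=\bLa\ti\bS5$.
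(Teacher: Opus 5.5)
There is a genuine gap in your main step: with $F=W/R_v$ and $C$ the set of horizontal threads, the map $(X,g)\mapsto g(X)$ is in general not a p-morphism. The failure is in the $R_h$-back condition. In a product, a horizontal move from $(X,g)$ keeps the $C$-coordinate $g$ fixed. So for every $u$ with $g(X)\,R_h\,u$ you need some $Y$ with $X\,S\,Y$ and $g(Y)=u$. Since $u\in Y$ forces $Y=[u]$, this is impossible as soon as $g(X)$ has two distinct $R_h$-successors in the same $R_v$-class.

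Your remedy of ``modifying the thread to pass through a successor'' changes the $C$-coordinate, so it is a vertical move and does not address this. Unravelling does not help either, because the quotient by $R_v$ identifies those successors again. Concretely, take $W=\{a,b_1,b_2\}$ with $R_h=\{(a,b_1),(a,b_2)\}$ and $R_v$-classes $\{a\}$, $\{b_1,b_2\}$. This is a rooted $[\BK,\bS5]$-frame (both commutation conditions are easily checked), and its $R_h$-reduct is already a tree. Yet your $F$ has just two points, $\{a\}\,S\,\{b_1,b_2\}$, so every point of $F\ti C$ has at most one horizontal successor. Hence no product with this first factor maps p-morphically onto $W$.

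There is also a structural reason the construction cannot work as it stands. The map $w\mapsto[w]$ is a p-morphism from $(W,R_h)$ onto $F$ (by commutativity), so $F$ validates $\bLa$ for free and Horn axiomatizability plays no role. Your argument would therefore show that every canonical $\bLa$ is product-matching with $\bS5$, for instance $\BK+Alt_n$ with $n\geq 3$. That contradicts Theorem~\ref{Tneg}.

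The missing idea is the argument the paper takes from \cite{GKWZ}, whose outline is visible in the proof sketch of Proposition~\ref{SPM}. One first shows that every countable rooted $[\BK,\bS5]$-frame $G$ is a p-morphic image of $F\ti C$, with $F$ an intransitive tree and $C$ a countable cluster. $F$, $C$ and $f$ are built step by step, curing defects as follows.
\begin{itemize}
\item A defect $f(x,c)\,R_h\,w$ is cured by adding a \emph{fresh} child $x'$ of $x$ with $f(x',c)=w$. The rest of the column $\{x'\}\ti C$ is filled in using $\sqr\bsq p\rightarrow\bsq\sqr p$.
\item A defect $f(x,c)\,R_v\,w$ is cured by adding a fresh $c'$ to $C$ with $f(x,c')=w$. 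The row $F\ti\{c'\}$ is filled in upwards using $\bsq\sqr p\rightarrow\sqr\bsq p$ and downwards using $\sqr\bsq p\rightarrow\bsq\sqr p$.
\end{itemize}
In this way distinct $R_h$-successors always receive distinct points of $F$.

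A tree does not validate $\bLa$ in general, and this is exactly where Horn axiomatizability enters. Replace the relation of $F$ by its least closure under the universal Horn conditions defining the $\bLa$-frames, obtaining $F^+\models\bLa$. The same $f$ is then a p-morphism from $F^+\ti C$ onto $G$:
\begin{itemize}
\item the back conditions survive because the relations only grew;
\item forth for each added pair follows by induction along the closure. For each fixed $c$, the map $x\mapsto f(x,c)$ is a homomorphism into $(W,R_h)$, Horn antecedents are positive, and $(W,R_h)$ itself satisfies the Horn conditions.
\end{itemize}
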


For semiproducts an analogue of this theorem does not hold (see below). Let us recall, in a
slightly more general form, a number of positive results presented in
\cite{GKWZ}, Theorem 9.10.\footnote{In \cite{GKWZ} semiproducts are called
  `expanding relativized products', $\bLa\sem$ is denoted by
  $[\bLa,\bS5]^{EX}$, $\bLa\rtt\bS5$ by $(\bLa\ti\bS5)^{EX}$.}

\begin{definition}
A {\em one-way PTC-logic} is a modal propositional logic axiomatized by formulas of the form $\sqr p\ri\sqr^n p$ and variable-free formulas. 
\end{definition}

\begin{theorem}\label{T6} 
$\bLa$ and $\bS5$ are semiproduct-matching for any one-way PTC-logic $\bLa$.
\end{theorem}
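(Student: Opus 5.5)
The inclusion $\bLa\sem\sbq\bLa\rtt\bS5$ is routine. The axioms of $\bLa$ hold along horizontal rays because $R_h(W)\sbq W$, so every horizontal cone is a copy of a cone of the $\bLa$-frame. The $\bS5$ axioms hold in the vertical clusters. Left commutativity $\sqr\bsq p\ri\bsq\sqr p$ holds in any semiproduct: if $(u,v)R_v(u,v')R_h(u',v')$, then $(u,v)R_h(u',v)$ with $(u',v)\in W$, and $(u',v)R_v(u',v')$.

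The real work is the converse, i.e.\ completeness of $\bLa\sem$ with respect to semiproducts. I would argue by canonical models plus a step-by-step construction.

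First, consider the canonical frame $(M,R_\sqr,R_\bsq)$ of $\bLa\sem$. Because one-way PTC-axioms are Sahlqvist, it satisfies the following:
\begin{itemize}
\item the $\bLa$-conditions for $R_\sqr$, namely $R_\sqr\sbq R_\sqr^n$ for each axiom $\sqr p\ri\sqr^n p$, together with the (canonical) variable-free conditions;
\item $R_\bsq$ is an equivalence;
\item left commutativity $R_\bsq\circ R_\sqr\sbq R_\sqr\circ R_\bsq$.
\end{itemize}

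Given a consistent point $x_0$, I would build a semiproduct of a $\bLa$-frame $F=(U,R)$ with a cluster $V$, together with a surjective-on-witnesses map $h:W\to M$, satisfying two requirements. Horizontal steps must be mapped to $R_\sqr$-steps and vertical steps to $R_\bsq$-steps. Every $\Di$- and $\Di_\bsq$-defect must be repaired. The construction is an $\omega$-chain of finite approximations:
\begin{itemize}
\item A $\Di_\bsq$-defect at $(u,v)$ is cured by adding a new vertical coordinate $v'$ and the point $(u,v')$.
\item A $\Di$-defect at $(u,v)$ is cured by adding a new $u'$ with $uRu'$ and the point $(u',v)$.
\end{itemize}
Closure under $R_h$ is then forced: for every $(u,v)\in W$ and $u'\in R(u)$ we need $(u',v)\in W$ with an appropriate image. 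This is exactly where left commutativity is used. If $h(u,v)R_\bsq h(u,v_0)$ and $h(u,v_0)R_\sqr y$ with $y=h(u',v_0)$, then commutativity yields $z$ with $h(u,v)R_\sqr z R_\bsq y$, and we set $h(u',v)=z$. The point of the one-way (semiproduct, not product) setting is that we never need the converse direction, i.e.\ right commutativity, which is unavailable.

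The main obstacle is to keep the first coordinate frame $(U,R)$ a $\bLa$-frame while new horizontal successors propagate to every vertical coordinate. The conditions $R\sbq R^n$ are handled by taking, at the end, the relation $R$ on $U$ to be the closure needed. Concretely, when we add $uRu'$ we must also supply an $n$-step path from $u$ to $u'$, with images in $M$ obtained from canonicity ($xR_\sqr y\Rightarrow xR_\sqr^n y$), and then transport this path to every row $v$ via left commutativity as above.

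I would organize this by letting $U$ be a tree-like frame of ``histories'' in which each edge is labelled so that $R$ is generated from basic edges by the PTC condition. Since these conditions are universal Horn and $n$-step-path-preserving, they are closed under this transport. For the variable-free axioms, I would choose $h(u,\cdot)$ so that all points in a column $\{u\}\times V$ agree on variable-free formulas; this holds since $R_\bsq$-equivalent canonical points agree on $\sqr$-constant formulas by left commutativity. Then $u$ inherits constant-formula truth from $h(u,v)$, and the horizontal frame validates the constant axioms by a generated-submodel/p-morphism argument into the $\sqr$-reduct. Finally, a truth lemma for $h$ gives $x_0$ satisfied in a semiproduct of a $\bLa$-frame with $\bS5$.
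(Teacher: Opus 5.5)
The overall architecture is the right one: the canonical frame of $\bLa\sem$, a step-by-step construction of a semiproduct with a map $h$ satisfying the forth and witness conditions, and left commutativity to propagate new horizontal successors to every row. The gap is in how you handle the PTC axioms, which rests on a reversed correspondence.

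The axiom $\sqr p\ri\sqr^n p$ defines $R^n\sbq R$ (take $p$ true exactly on $R(x)$). For $n=0$ this is reflexivity, for $n=2$ transitivity. It does not define $R\sbq R^n$, which is the condition for $\sqr^n p\ri\sqr p$ (density when $n=2$).

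Consequently your claim that the canonical frame satisfies $xR_\sqr y\Rightarrow xR_\sqr^n y$ is false. Take $\bLa={\bf K4}$. The formula $\Di\top\we\sqr\sqr\bot$ is consistent with ${\bf K4}\sem$: it holds at the root of the semiproduct of a two-point irreflexive chain with a one-point cluster. A maximal consistent set containing it has an $R_\sqr$-successor but no $R_\sqr^2$-successor.

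So the step ``when we add $uRu'$ we must also supply an $n$-step path'' cannot be carried out in general. More importantly, it aims at the wrong target. What the first-coordinate frame actually needs is that every $n$-step path is shortcut by a direct edge. The tree produced by defect repair lacks this property; for ${\bf K4}$ it is not transitive. Note also that $R\sbq R^n$ is a $\forall\exists$ condition, not universal Horn, so your later appeal to Horn conditions does not fit the condition you wrote down.

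The repair is a Horn closure, as in the paper's sketch for Proposition~\ref{SPM}.
\begin{itemize}
\item Keep the basic edges $E$ produced by defect repair as a tree. This makes the left-commutativity propagation of $h$ well defined, including for rows $v'$ created later by $\Di_\bsq$-defects and pushed down the subtree below $u$.
\item Let $R$ be the least relation containing $E$ and closed under the conditions $R^n\sbq R$ of the axioms (add loops when $n=0$).
\end{itemize}
Two easy checks are then needed.
\begin{itemize}
\item $W$ stays $R_h$-closed. Since $R\sbq E^*$, every derived edge is traced by an $E$-path, along which $W$ is already closed row by row.
\item The forth condition holds for derived edges. Along any row, $h(u_0,v)R_\sqr h(u_1,v)\cdots R_\sqr h(u_n,v)$ gives $h(u_0,v)R_\sqr^n h(u_n,v)$. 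The correct canonical condition $R_\sqr^n\sbq R_\sqr$ then gives $h(u_0,v)R_\sqr h(u_n,v)$.
\end{itemize}

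With this, the truth lemma goes through, and the variable-free axioms need no separate treatment. The horizontal cone of $(u,v)$ in $W$ is isomorphic to the cone of $u$ in $(U,R)$. Hence $u$ satisfies each variable-free $c\in\bLa$ simply because $c\in h(u,v)$. Your observation that $R_\bsq$-equivalent canonical points agree on variable-free formulas is true, but it is not needed.
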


\section{Counterexamples}

\begin{theorem}(cf. \cite{P45SS2})
Let
$$\sqr\BT:=\BK+\sqr(\sqr p\ri p), \quad
\bSL4:=\bbK4+\Di p\,\lrarr\,\sqr p.$$
If $\sqr\BT\sbq\bLa\sbq\bSL4$, then $\bLa$ and $\bS5$ are not
semiproduct-matching.
\end{theorem}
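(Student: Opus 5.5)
The plan is to exhibit one bimodal formula that is valid in every semiproduct of a $\sqr\BT$-frame with an $\bS5$-cluster, hence belongs to $\bLa\rtt\bS5$ for every $\bLa\supseteq\sqr\BT$. Then we refute it in a Kripke frame for $\bSL4\sem$. Since $\bLa\sem\sbq\bSL4\sem$ whenever $\bLa\sbq\bSL4$, this shows the formula is not in $\bLa\sem$, so $\bLa\sem\neq\bLa\rtt\bS5$.

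The candidate formula is
$$A:=\sqr\bsq(\sqr p\ri p).$$

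\emph{Validity in semiproducts.} Let $F=(U,R)$ validate $\sqr\BT$, i.e.\ every $R$-successor of every point is reflexive. Take a semiproduct $(F\ti C)|W$ with $C$ a cluster, and let $(u,v)R_h(u',v)R_v(u',v')$ with all three points in $W$. Then $u'$ is a successor of $u$, so $u'Ru'$. Hence $(u',v')R_h(u',v')$, and $\sqr p\ri p$ holds at $(u',v')$. Therefore $A$ is valid, and $A\in\bLa\rtt\bS5$.

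\emph{Refutation in a frame for $\bSL4\sem$.} Recall that $\bSL4$-frames are exactly the transitive frames in which every point has exactly one successor. Take $W=\{a,b,c,d\}$ with
$$R_h=\{(a,b),(b,b),(c,d),(d,d)\},$$
and let $R_v$ be the equivalence relation with classes $\{a\}$ and $\{b,c,d\}$.

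\begin{itemize}
\item The horizontal reduct is transitive and functional-serial, so it validates $\bSL4$.
\item The vertical reduct validates $\bS5$.
\item It remains to check the left commutativity axiom $\sqr\bsq p\ri\bsq\sqr p$. Its Sahlqvist correspondent is: if $xR_vyR_hz$ then $xR_htR_vz$ for some $t$. Every $R_h$-successor lies in the class $\{b,c,d\}$. So for $x\in\{b,c,d\}$ it suffices to take $t=R_h(x)$, which is again in that class. For $x=a$ the only case is $y=a$, $z=b$, and $t=b$ works.
\end{itemize}

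Now put $p$ true exactly at $d$. At $c$ we have $\sqr p$ (its only successor is $d$) but not $p$. Since $aR_hbR_vc$, the formula $A$ fails at $a$. Hence $A\notin\bSL4\sem\supseteq\bLa\sem$.

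The only delicate step is verifying the commutativity condition for the counterexample frame. This is why the whole class $\{b,c,d\}$ is made into a single cluster: it absorbs all horizontal moves. Everything else is routine.
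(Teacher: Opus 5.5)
Your proof is correct and follows the paper's approach. The paper's proof only observes that $\sqr\bsq(\sqr p\ri p)\in(\bLa\rtt\bS5)-(\bLa\sem)$, and you verify both halves of that claim: validity in semiproducts via reflexivity of successors in $\sqr\BT$-frames, and non-membership via an explicit four-point $\bSL4\sem$-frame refuting the formula.
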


For the proof note that
$\sqr\bsq(\sqr p\ri p)\in(\bLa\rtt \bS5)-(\bLa\sem)$.

\smallskip
Hence we obtain counterexamples to an analogue of Theorem \ref{T28}: Horn axiomatizable logics $\sqr\BT$, ${\bf K5}$, ${\bf K45}$ are not  semiproduct-matching with $\bS5$.

Nevertheless, we have

\begin{remark}
  (cf. \cite{P48abs4}) {\em Every complete Horn axiomatizable logic is quantifier-friendly.}
\end{remark}

\begin{theorem}\label{Tneg}
  If $\BK+Alt_n\sbq\bLa\sbq\BK+Alt_n+\sqr^m\bot$ for $n\geq 3$,
  $m\geq 2$, then
  $\bLa$ and $\bS5$ are neither product- nor semiproduct-matching.
\end{theorem}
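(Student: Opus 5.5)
The plan is to exhibit a single bimodal formula $\chi_n$ with two properties. First, $\chi_n$ is valid in every semiproduct of a $\BK+Alt_n$-frame with an $\bS5$-cluster. Second, $\chi_n$ is refuted in some (non-product) frame validating $[\BK+Alt_n+\sqr^m\bot,\bS5]$.

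This suffices for both claims. Every product is a semiproduct, and every $\bLa$-frame is an $Alt_n$-frame. Hence
$$\chi_n\in(\BK+Alt_n)\rtt\bS5\sbq\bLa\rtt\bS5\sbq\bLa\ti\bS5 .$$
On the other side,
$$\bLa\sem\sbq[\bLa,\bS5]\sbq[\BK+Alt_n+\sqr^m\bot,\bS5],$$
so $\chi_n$ lies in neither $\bLa\sem$ nor $[\bLa,\bS5]$. Both matching properties therefore fail simultaneously.

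\textbf{Validity step.} The key feature of a (semi)product $(F\ti C)|W$ is that horizontal successors are ``the same'' in every row. The points reachable by $\Di$ from $(u,v')$ for various $v'$ all lie in the columns over $R(u)$, and there are at most $n$ such columns. So I would write a pigeonhole formula using $\blacklozenge\Di$ to range over successor columns. A first candidate is
$$\textstyle\bigwedge_{i=0}^{n}\blacklozenge\Di\bsq p_i\ \ri\ \bigvee_{i<j}\blacklozenge\Di\bsq(p_i\we p_j).$$
It is valid because two of the $n+1$ witnesses must sit in the same column. In a semiproduct, $\bsq$ at two points of one column ranges over the same set of $W$-points, so $\bsq p_i$ and $\bsq p_j$ hold together. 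Since the theorem requires $m\geq2$ and $n\geq3$, I expect that this depth-one formula is actually derivable in the minimal logic. If so, I would nest one more $\Di$, so that pigeonhole is applied to second-level successors, possibly routed through a different first-level successor. The final $\chi_n$ will probably be such a depth-2 variant, with the precise counting exploiting $n\geq3$.

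\textbf{Non-derivability step.} This is the main obstacle. I need a bimodal frame $G=(X,R,S)$ with four properties: $S$ is an equivalence; $R$ is $Alt_n$ of depth $\leq m$; both commutation properties of $[\,\cdot,\bS5]$ hold (left and right commutativity of $R$ with $S$); and yet more than $n$ distinct ``columns'' are reachable via $S\circ R$. The idea is to build $G$ as a finite frame in which different points of one $S$-cluster have \emph{different} $R$-successor sets, each of size $\leq n$. The union of these successor sets must exceed $n$ columns while commutativity still holds, and $n\geq3$ should give enough room to arrange the successors combinatorially, e.g.\ in a cyclic or Latin-square pattern over an $S$-cluster of size $n+1$. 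I would verify the frame conditions directly, then take the valuation making each $p_i$ true exactly on one column-like $S$-class, which refutes $\chi_n$. Soundness of $[\BK+Alt_n+\sqr^m\bot,\bS5]$ on $G$ then gives non-derivability.
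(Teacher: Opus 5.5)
Your reduction is sound and has the same shape as the paper's argument: a formula in $(\BK+Alt_n)\rtt\bS5$ refuted on a frame of $[\BK+Alt_n+\sqr^m\bot,\bS5]$. The gap is that the invariant you plan to exploit cannot separate these logics, and the frame you describe in the non-derivability step does not exist. Every frame of $(\BK+Alt_n)\sem$, hence of $[\BK+Alt_n+\sqr^m\bot,\bS5]$, validates $\sqr\bsq p\ri\bsq\sqr p$. This means that $xSyRz$ implies $xRwSz$ for some $w$. So everything reachable from $x$ by $\blacklozenge\Di$ lies in the $S$-classes of the at most $n$ successors of $x$. Hence ``more than $n$ columns reachable via $S\circ R$'' is impossible. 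Your depth-one formula, as you suspected, is valid on all these frames, and so is any formula whose validity rests on counting reachable $S$-classes. Nesting a second $\Di$ does not help. The theorem must already hold for $\bLa=\BK+Alt_n+\sqr^2\bot$, whose frames have no second-level successors, so the separating frame has to work at depth one.

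What is missing is the right invariant. In $(F\ti C)|W$ the successors of $S$-related points $(u,v)$ and $(u,v')$ are matched bijectively by $(u_k,v)\leftrightarrow(u_k,v')$, and matched points are $S$-related. So if $(u,v)$ is forced to have exactly $n$ successors, every union of $S$-classes contains as many successors of $(u,v)$ as of $(u,v')$. Class counting cannot see this multiplicity condition. The paper's frame $G_n$ violates it while satisfying $Alt_n$, $\sqr^2\bot$, both commutativities, and $S$ being an equivalence. Its points are $(x,y)$ with $0\leq x\leq n$ and $y\in\{1,2\}$, with $(0,y)R(j,y)$ for $1\leq j\leq n$. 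Its $S$-classes are $\{(0,1),(0,2)\}$, $A=\{(1,1),(2,2),(3,2)\}$, $\{(1,2),(2,1),(3,1)\}$, and $\{(j,1),(j,2)\}$ for $j\geq4$. Here $(0,1)$ has one successor in $A$ while $(0,2)$ has two, which is exactly where $n\geq3$ is used. Note that the leaves form only $n-1$ classes, the opposite of what you aimed for. The paper finishes with the Fine--Jankov formula of $G_n$. An explicit formula also works: take the negation of
\[
\textstyle\bigwedge_{i<j}\sqr\neg(p_i\we p_j)\we\bigwedge_{i=1}^{n}\Di p_i\we\sqr(\bsq a\ri p_1)\we\blacklozenge\big(\Di(\bsq a\we r)\we\Di(\bsq a\we\neg r)\big).
\]
This negation is valid in every semiproduct of an $Alt_n$-frame with a cluster. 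The markers $p_i$ force the $n$ successors of $(u,v)$ to carry distinct markers bijectively. The two distinct $\bsq a$-successors of $(u,v')$ then transfer along their columns to two distinct $\bsq a$-successors of $(u,v)$, both of which would have to satisfy $p_1$. Yet the conjunction holds at $(0,1)$ in $G_n$ with $p_i$ true at $(i,1)$, $a$ true on $A$, and $r$ true at $(2,2)$.
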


\begin{proof} (Sketch.) Take the product $F_1\ti F_2$, where $F_1$ is
  the irreflexive tree with the root $0$ and the leaves $1,\ldots,n$
  and $F_2$ is the two-element cluster $\{1,2\}$; replace $R_v$ by the
  least equivalence relation $S_2$ such $(x,y)S_2(x',y')$ for $x=x'=0$
  or $x=x'>3$,
  $(1,1)S_2(2,2), ~(1,1)S_2(3,2),~(1,2)S_2(2,1), ~(1,2)S_2(3,1)$. The
  resulting frame $G_n$ is not a p-morphic image of a semiproduct of a
  $(\BK+Alt_n)$-frame and a cluster while
  $G_n\mo [\BK+Alt_n+\sqr^2\bot,\bS5]$. Therefore its Fine-Jankov
  formula belongs to $(\bLa\rtt \bS5)-[\bLa,\bS5]$.
\end{proof}

A standard canonical model argument proves Kripke-completeness
of all the logics $\BQ\bLa$ for $\bLa=\BK+Alt_n,~\BK+Alt_n+\sqr^m\bot$. So we obtain
\begin{corollary}
The logics $\BK+Alt_n$, $\BK+Alt_n+\sqr^m\bot$ are not quantifier-friendly for $n\geq 3$, $m\geq2$.
\end{corollary}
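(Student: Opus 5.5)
The plan is to derive the Corollary from the Lemma relating $\BQ\bLa\!-\!1$ to $\bLa\rtt\bS5$, together with Kripke completeness of $\BQ\bLa$ and Theorem~\ref{Tneg}. Fix $\bLa$ to be $\BK+Alt_n$ or $\BK+Alt_n+\sqr^m\bot$ with $n\geq 3$, $m\geq 2$.

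First, as stated just before the Corollary, a canonical model argument shows that $\BQ\bLa$ is Kripke complete, i.e.\ $\BQ\bLa=\overline{\BQ\bLa}$. For this one builds the canonical predicate model with expanding domains (worlds are Henkin-saturated theories, with constants added along accessibility). One then checks that the Sahlqvist-type axioms $Alt_n$ and $\sqr^m\bot$ are canonical, i.e.\ they hold in the underlying propositional frame. For $Alt_n$ this is the usual argument: if a world saw $n+1$ distinct successors, separating formulas would refute $Alt_n$. For $\sqr^m\bot$ it is immediate. Kripke completeness holds without Barcan because only expanding domains are needed.

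Second, Kripke completeness gives $\overline{\BQ\bLa}\!-\!1=\BQ\bLa\!-\!1$. By part 1 of the Lemma, this yields $\BQ\bLa\!-\!1=\bLa\rtt\bS5$.

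Third, by Theorem~\ref{Tneg}, $\bLa$ and $\bS5$ are not semiproduct-matching, so $\bLa\sem\neq\bLa\rtt\bS5$. Hence $\BQ\bLa\!-\!1\neq\bLa\sem$, which means $\bLa$ is not quantifier-friendly.

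The main obstacle is the completeness step. One must verify that the canonical frame construction respects both the expanding-domain condition and the Alt/depth restrictions simultaneously. The first thing to try is the standard Henkin construction from \cite{GSS}, checking that each world's successor count is bounded by $n$ exactly as in the propositional case.
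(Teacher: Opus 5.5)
Your reduction is the paper's own. Kripke completeness of $\BQ\bLa$ together with part 1 of the Lemma gives $\BQ\bLa\!-\!1=\overline{\BQ\bLa}\!-\!1=\bLa\rtt\bS5$, and Theorem~\ref{Tneg} gives $\bLa\rtt\bS5\neq\bLa\sem$. The paper also only asserts completeness by ``a standard canonical model argument''. So the one thing to correct is your sketch of that step: the check you propose there would actually fail.

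In the predicate canonical model with expanding domains, a world $\Gamma$ is a Henkin theory in $L(S_\Gamma)$. Its successors are all Henkin theories $\Delta$ in languages $L(S_\Delta)$ with $S_\Delta\supseteq S_\Gamma$ and $\{A\mid\sqr A\in\Gamma\}\subseteq\Delta$. Two such $\Delta$ can agree on every $L(S_\Gamma)$-sentence and still be distinct worlds:
\begin{itemize}
\item they may live in different languages;
\item or they may differ on sentences with new constants. If the common $L(S_\Gamma)$-part contains $\exists x P(x)\wedge\exists x\neg P(x)$, one may contain $P(c)$ and another $\neg P(c)$ for a fresh $c$.
\end{itemize}
The instances of $Alt_n$ available in $\Gamma$ involve only $L(S_\Gamma)$-sentences. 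So the separating-formula argument bounds by $n$ only the number of $L(S_\Gamma)$-restrictions of successors, not the successors themselves. The full canonical frame is therefore not an $Alt_n$-frame, and ``checking the successor count exactly as in the propositional case'' does not work.

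The fix is to pass to a selective submodel:
\begin{itemize}
\item For each world $\Gamma$, take the at most $n$ maximal consistent sets of $L(S_\Gamma)$-sentences containing $\{A\mid\sqr A\in\Gamma\}$.
\item Extend each of them to a single Henkin theory using fresh constants.
\item Let $\Gamma$ see only these chosen worlds, generating the model from the initial world.
\end{itemize}
The truth lemma survives. For $\Di A\in\Gamma$ with $A$ an $L(S_\Gamma)$-sentence, you only need some successor whose $L(S_\Gamma)$-part contains $A$. The quantifier step uses the Henkin property and expanding domains. And $\sqr^m\bot$ holds in this subframe for the reason you gave. With this modification your argument is complete.
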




\section{Local tabularity}
Recall that a propositional logic $L$ is {\em locally tabular}, if for
any finite $k$ there exist finitely many $L$-non-equivalent formulas in
$k$ proposition letters.
 
It is well known that every extension of a locally tabular modal logic
in the same language is locally tabular; every locally tabular logic
has the FMP.

\begin{theorem}\label{T81}
Every logic $(\BK+\sqr^n\bot)\sem$ is locally tabular.
\end{theorem}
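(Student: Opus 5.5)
Writing $L_n:=(\BK+\sqr^n\bot)\sem$, the plan is to show that for every $k$ there are only finitely many $L_n$-nonequivalent formulas in $k$ letters. First I reduce to Kripke frames. Since $\BK+\sqr^n\bot$ is a one-way PTC-logic (its only axiom is variable-free), Theorem~\ref{T6} gives $L_n=(\BK+\sqr^n\bot)\rtt\bS5$. So $L_n$ is the logic of all semiproducts $(F_1\ti C)|W$, where $F_1$ is a frame of depth at most $n$ (every $R_1$-path has length $<n$), $C$ is a cluster, and $W$ is closed under $R_h$. By a standard argument, it suffices to find a function $f(n,k)$ with the following property: every pointed model $(M,w)$ over such a semiproduct, with a valuation for $p_1,\dots,p_k$, is bisimilar (for both $R_h$ and $R_v$) to a pointed model of size at most $f(n,k)$. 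Then every formula in $k$ letters is determined, modulo $L_n$, by the set of such small pointed models (up to isomorphism) at which it is true. There are finitely many such sets.

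I obtain the bound by induction on the height $h\le n$ of the first coordinate of the point. Call $w=(u,v)$ of height $h$ if the longest $R_1$-path from $u$ has length $h$. For the bisimulation game I use the usual truncation by depth: consider formulas with modal depth bounded by some $d$. The key claim is that for each $h$ there are only finitely many ``types'' of points of height $\le h$ up to bisimulation, and this must hold for an arbitrary number of $\bsq$-rounds. Define $T_h(k)$ to be the set of full bisimulation types realized at height $h$.

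For $h=0$: the point has no $R_h$-successors. Its $\bsq$-class is a set of points of various heights, but all of them lie in one fibre $\{u'\}\ti C$ only for the product. In the semiproduct, the $R_v$-class of $(u,v)$ is $\{u\}\ti(C\cap W_u)$, where $W_u$ is the fibre. So every point in the class has the same first coordinate $u$, and hence the same height. I use this crucially. A height-$h$ point's type is therefore determined by its own atoms, the set of $R_h$-successor types (all of height $<h$), and the set of such ``local'' types present in its $\bsq$-class. More precisely, define a class type as the set of pairs (atoms, set of successor types) occurring in the fibre. Since successor types range over the finite set $\bigcup_{h'<h}T_{h'}$ by the induction hypothesis, the local pairs range over a finite set $P_h$, and class types range over subsets of $P_h$. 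The full type of a point is then its pair together with the class type. I verify by the bisimulation game that two points with equal full types are bisimilar: $\bsq$-moves stay inside fibres with the same class type, and $\sqr$-moves go to successors whose types match by induction. This yields finiteness of $T_h$, with a tower-of-exponentials bound in $n$.

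The main obstacle is to check that this type assignment is well defined, in particular that the successor types are genuinely ``full'' types. Points in a fibre above a successor of $u$ can belong to a fibre larger than the image of the $\bsq$-class of $u$ (expanding domains). However, $R_h$-successors of $(u,v)$ are $(u',v)$, and the full type of $(u',v)$ already includes its own fibre's class type, so the induction should close. Finally, from the finite set of types I build a finite model, or simply conclude directly: the full types form a finite bisimulation quotient. Formulas are invariant under bisimulation, so there are at most $2^{|T_{\le n}|}$ inequivalent formulas, and $L_n$ is locally tabular.
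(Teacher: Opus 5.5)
Your proof is correct, and it follows a somewhat different route from the one the paper indicates.

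The paper only says the theorem is obtained with the bisimulation-game technique of its cited reference, where one would argue about Duplicator's strategies in games of bounded length. You instead write down directly a bisimulation with finitely many classes, via the recursion
$\tau(w)=\bigl(\text{atoms}(w),\ \{\tau(x) : wR_hx\},\ \{(\text{atoms}(y),\{\tau(z) : yR_hz\}) : wR_vy\}\bigr)$.
This recursion is well founded because every $R_v$-class of a semiproduct lies over a single point $u$, so it has constant height. The relation ``same $\tau$'' is a bisimulation between arbitrary models because $R_v$ is universal on each fibre. Your approach gives an explicit tower-type bound on the number of non-equivalent $k$-formulas.

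The cost is the detour through Theorem~\ref{T6}, which is a substantial imported result, and it can be avoided. In any frame of $L_n$ itself, the axiom $\Box\blacksquare p\to\blacksquare\Box p$ gives: if $xR_vyR_hz$, then $xR_hx'R_vz$ for some $x'$. So every $R_h$-path from $y$ lifts to an $R_h$-path of the same length from $x$. By symmetry of $R_v$, all points of an $R_v$-class then have the same $R_h$-height. Hence your recursion runs verbatim on the canonical frame of $L_n$, which is an $L_n$-frame since all the axioms are Sahlqvist. This proves local tabularity of the axiomatic logic directly, without semiproduct-matching.

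Two minor points of presentation:
\begin{itemize}
\item The remark that the $\blacksquare$-class contains points of various heights, with everything in one fibre ``only for the product'', is muddled. In the product the class is also $\{u\}\times C$.
\item The truncation to modal depth $d$ is never used.
\end{itemize}
Both can simply be dropped.
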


This theorem is proved by using bisimulation games; the corresponding technique is described in \cite{p42}.



A monomodal logic $\bLa$ is {\em of finite depth} if $\sqr^n\bot\in\bLa$ for some $n$.

\begin{corollary}
If $\bLa$ is of finite depth, then the logics 
$\bLa\rtt\bS5, ~\bLa\sem$ have the FMP; so their finite axiomatizability implies decidability.
\end{corollary}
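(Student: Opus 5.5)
Since $\bLa$ has finite depth, $\sqr^n\bot\in\bLa$ for some $n$, so $\BK+\sqr^n\bot\sbq\bLa$. The plan is to show that both $\bLa\sem$ and $\bLa\rtt\bS5$ extend $(\BK+\sqr^n\bot)\sem$ in the same bimodal language. Theorem \ref{T81} then makes them locally tabular, and locally tabular logics have the FMP.

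For $\bLa\sem$ the inclusion is immediate: it is axiomatized as $\bLa*\bS5$ plus the left commutativity axiom, and $\bLa\supseteq\BK+\sqr^n\bot$. For $\bLa\rtt\bS5$ we use the Lemma giving $\bLa\sem\sbq\bLa\rtt\bS5$ (via $\BQ\bLa\!-\!1$). We can also see it directly: every semiproduct of a $\bLa$-frame with a cluster validates the fusion axioms and $\sqr\bsq p\ri\bsq\sqr p$, because $R_h(W)\sbq W$. Hence $(\BK+\sqr^n\bot)\sem\sbq\bLa\sem\sbq\bLa\rtt\bS5$. An extension of a locally tabular logic in the same language is locally tabular, so both logics are locally tabular and therefore have the FMP.

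For decidability, suppose the logic $L$ (either of the two) is finitely axiomatizable. Then $L$ is recursively enumerable, since its theorems can be enumerated from the finite axiom set. Its complement is also recursively enumerable: by the FMP, every non-theorem is refuted in some finite frame validating $L$. We can enumerate finite frames and check validity of the finitely many axioms in each, which is a decidable test. A frame validating the axioms validates all of $L$, because validity in a frame is preserved under substitution and the inference rules. Note that the FMP in the locally tabular case can be taken with respect to frames: every finite algebra of a locally tabular logic corresponds to a finite frame, because finite modal algebras are complete and atomic. By Post's argument, $L$ is decidable.

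The only nontrivial ingredient is Theorem \ref{T81}, which we take as given. Beyond that, the one point needing care is the FMP-to-decidability step, namely that the FMP is with respect to finite frames (Kripke frames) rather than merely finite algebras. This holds because a finite modal algebra is isomorphic to the complex algebra of its atom structure, so finite frames suffice.
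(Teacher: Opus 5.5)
Your proposal is correct and follows the paper's argument. The paper's route is the same chain: $(\BK+\sqr^n\bot)\sem\sbq\bLa\sem\sbq\bLa\rtt\bS5$, then local tabularity from Theorem~\ref{T81} passing to extensions, then the FMP, then the standard finite-axiomatizability-plus-FMP argument for decidability. Your extra details are sound points the paper leaves implicit: that semiproduct frames validate the axioms, and that finite frames suffice because finite modal algebras are complex algebras of their atom structures.
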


In particular, $\bLa\rtt\bS5$ ($\bLa\ti\bS5$) is decidable, provided
$\bLa$, $\bS5$ are semiproduct- (product-) matching and $\bLa$ is of finite depth.

\section{More examples of semiproduct-matching}
In contrast with Theorem \ref{Tneg}, we can identify some other
logics that are semiproduct-matching with $\bS5$.

\begin{lemma}
Consider the axiom $Ath:=\Di\Di p\ri \sqr\Di p$. $Ath$-frames are defined by the following first-order condition:
$$\fo x,y,z,u\, (xRy \we xRz \we yRu \ri zRu).$$
\end{lemma}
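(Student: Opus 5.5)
We show that for every Kripke frame $F=(W,R)$,
$$F\mo Ath \iff F\mo \fo x,y,z,u\,(xRy\we xRz\we yRu\ri zRu).$$
This is a standard Sahlqvist-type correspondence: $Ath$ has the form $\Di\Di p\ri\sqr\Di p$, with a positive boxed-diamond consequent. Both directions are elementary, so we argue directly rather than appeal to Sahlqvist's theorem.

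\emph{Sufficiency.} Assume the condition holds and fix a valuation and a point $x$ with $x\mo\Di\Di p$. Then there are $y,u$ with $xRy$, $yRu$ and $u\mo p$. Let $z$ be any point with $xRz$. The condition applied to $x,y,z,u$ gives $zRu$, hence $z\mo\Di p$. Since $z$ was arbitrary, $x\mo\sqr\Di p$.

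\emph{Necessity.} Assume $F\mo Ath$ and let $xRy$, $xRz$, $yRu$. Take the minimal valuation $V(p)=\{u\}$. Then $x\mo\Di\Di p$, so by validity $x\mo\sqr\Di p$. In particular $z\mo\Di p$, so $z$ has an $R$-successor in $V(p)=\{u\}$, that is, $zRu$.

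There is no real obstacle. The only point requiring care is that the minimal valuation $\{u\}$ is exactly what forces the successor of $z$ to be $u$ itself.

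It is worth recording, for later use, that the condition is a universal Horn formula. Taking $y=z$ yields $xRy\we yRu\ri yRu$, which is trivial, so the condition says: all $R$-successors of $x$ share the same successors as any one of them. Hence $\BK+Ath$ is Horn axiomatizable, and (being Sahlqvist) canonical and Kripke complete. This is what makes it a candidate for semiproduct-matching with $\bS5$ in what follows.
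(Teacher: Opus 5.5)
Your proof is correct. The paper states this lemma without proof, as a routine Sahlqvist-type correspondence; your two directions (the direct semantic check, and the minimal valuation $V(p)=\{u\}$ for necessity) are exactly the standard verification behind it. One small slip in your closing remark: $R(y)=R(z)$ for all $y,z\in R(x)$ follows from applying the condition twice with the roles of $y$ and $z$ swapped, not from the trivial instance $y=z$.
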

We call these frames {\em thick}.

\begin{proposition}\label{SPM}
The logics $\BK+Ath$, $\BK+Ath+\sqr^n\bot$ for $n\geq 1$ are semiproduct-matching with $\bS5$.
\end{proposition}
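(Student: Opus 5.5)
We have to show $\bLa\sem\supseteq\bLa\rtt\bS5$ for $\bLa=\BK+Ath$ and $\bLa=\BK+Ath+\sqr^n\bot$; the other inclusion is trivial, since every semiproduct of a $\bLa$-frame with a cluster validates the fusion axioms and the left commutativity axiom $\sqr\bsq p\ri\bsq\sqr p$. The plan is to show that every $\bLa\sem$-frame of a suitable kind (canonical or finite) is a p-morphic image of a semiproduct of a thick frame with a cluster. Since p-morphic images preserve validity, every formula of $\bLa\rtt\bS5$ then holds in these frames and hence belongs to $\bLa\sem$.

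\emph{Finite depth case.} Here Theorem~\ref{T81} makes $(\BK+\sqr^n\bot)\sem$, and hence its extension $\bLa\sem$, locally tabular. So $\bLa\sem$ is complete with respect to finite rooted frames $G=(W,R,S)$ in which $S$ is an equivalence, $R$ is thick and of depth $\le n$, and left commutativity holds: $x S y R z$ implies $x R y' S z$ for some $y'$. We then unravel $G$ along $R$ while keeping $S$-classes as the second coordinate. Concretely, take a cluster $C$ that is large enough, for instance $W$ times a large finite index set, and build a semiproduct $(F\times C)|V$ together with a map $f:V\to W$. The construction proceeds by induction on depth from the top, that is, from the root's $S$-class. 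Points of depth $k$ of $F$ represent "$R$-steps from an $S$-class". In a pair $(a,c)$, the first coordinate $a$ records an $R$-history. Every point of $W$ is realized in infinitely many copies in the cluster coordinate, so the back condition for $S$ can always be met. Left commutativity guarantees that every $R$-successor of an $S$-equivalent point can be reached horizontally, which gives the back condition for $R$. The fact that $R_h(V)\sbq V$ is built in.

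\emph{Thickness.} The main obstacle is making the first factor $F$ thick. Thickness says that all $R$-successors of a point have the same successors. Hence $F$ cannot be a plain tree unravelling; it must be arranged so that siblings share all their children, i.e.\ $F$ is "layered" above each point. So I would define $F$ layer by layer: the successors of a node $a$ form a set $L_a$, and all members of $L_a$ share one common successor set $L'_a$. The vertical coordinate from $C$ then has to carry enough information to distinguish which point of $G$ is meant. Since $G$ itself is thick, for $x$ with successors $y,y'$ we have $R(y)=R(y')$. This should let us label the common next layer consistently: a pair $(b,c)$ with $b\in L'_a$ is mapped via the cluster coordinate $c$, which encodes the intended target, and the thickness of $G$ ensures that the target is a successor of each $f(a',c')$ with $a'\in L_a$ that lies in the same $S$-class. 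I expect to spend most of the effort checking this forth/back condition, together with the requirement that $V$ is closed under horizontal moves; there I would take $V$ to be the whole set of pairs generated from the root.

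\emph{Case $\BK+Ath$ without depth bound.} I would first try the same construction on canonical-model-style frames, which are infinite, using an $\omega$-layered $F$. If that fails, I would fall back on finite-depth approximations: a non-theorem of $\bLa\sem$ of modal depth $d$ is refuted in a generated submodel in which points of $R$-depth $>d$ are cut off. This yields a frame of $(\BK+Ath+\sqr^{d+1}\bot)\sem$. Truncating preserves thickness and left commutativity, because both are universal conditions along paths. The finite depth case then gives a thick semiproduct, which is in particular a $\BK+Ath$-semiproduct refuting the formula.
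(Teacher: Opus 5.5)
Your outer reductions are fine: the trivial inclusion, and passing to finite frames via Theorem~\ref{T81}. The gap is the heart of the proof. Actually building a thick $F$, a cluster, and a p-morphism onto your thick frame $G$ is only announced, and the one concrete justification you give covers only the product-like case.

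Thickness of $G$ gives $f(a_2,c)\,R\,f(b,c)$ for a child $b$ of a sibling $a_1$ only when the common parent $(a,c)$ lies in $V$. In a semiproduct, however, second coordinates must in general enter at non-root nodes of $F$. If every $c$ passed through the root, closure under $R_h$ would give $V=F\times C$, a product. Its p-morphic images validate $\bsq\sqr p\ri\sqr\bsq p$, and this fails e.g.\ in the $(\BK+Ath+\sqr^2\bot)\sem$-frame on $\{x,y,y'\}$ with $R=\{(x,y)\}$ and $S$-classes $\{x\},\{y,y'\}$. So taking $V$ to be ``everything generated from the root'', read literally, cannot work.

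Now suppose a new $c$ enters at $a_1\in L_a$. Then $L'_a\times\{c\}\sbq V$. The values $f(b,c)$, $b\in L'_a$, must exhaust $R(f(a_1,c))$, and each must be $S$-related to the values $f(b,c')$ already fixed for older coordinates $c'$. Since $L'_a$ is shared by all siblings and all rows, it has to meet every such demand at once. Nothing in your sketch shows that a compatible choice exists, and that is the whole difficulty.

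The paper sidesteps this. It first covers a countable rooted $\BK\sem$-frame by a semiproduct of a \emph{tree} and a cluster, which is the standard construction and needs no thickness. It then passes to the Horn closure under $xRy\we xRz\we yRu\ri zRu$, which is a semiproduct of the closed tree $F^+$ and the same cluster. The target frame satisfies the same Horn condition, so every added edge is sent to an $R$-edge. The back conditions survive because edges were only added, so the old map remains a p-morphism. This works on countable frames directly, so $\BK+Ath$ needs neither local tabularity nor truncation.

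If you want to keep your direct construction, the missing idea is to let each node of $F$ encode a target for every row at once:
\begin{itemize}
\item Take $R_F=\bigcup_k\Lambda_k\times\Lambda_{k+1}$, which is automatically thick.
\item Let $\Lambda_{k+1}$ consist of tuples $t=(t_c)_c$, indexed by the second coordinates $c$ present at level $k$. Require $t_c\in N_c$, the common $R$-successor set of the level-$k$ images in row $c$ (well defined by thickness of $G$), and require all $t_c$ to lie in one $S$-class.
\item Set $f(t,c)=t_c$, and let new coordinates enter at $t$ to fill out that $S$-class.
\end{itemize}
Left commutativity, applied through a row containing all of $\Lambda_k$, shows that each element of $N_c$ has an $S$-relative in every $N_{c'}$. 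This yields the horizontal back condition.

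Finally, your truncation must be a level-indexed copy $W\times\{0,\dots,d\}$ with $(x,k)R'(y,k+1)$ and $(x,k)S'(y,k)$. Merely deleting deep points leaves $R$-cycles (e.g.\ a reflexive root), so the result does not validate $\sqr^{d+1}\bot$.
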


\begin{proof} (Sketch.)  Every countable rooted $\BK\sem$-frame $H$ is
  a p-morphic image of a semiproduct $G$ of a tree $F$ and a cluster
  $C$; the proof is similar to the one for products,
  cf. \cite{GKWZ}. Since $Ath$ is a Horn formula, we can take the
  corresponding Horn closure $G^+$; then $G^+$ is a semiproduct of
  $F^+$ and $C$. If $H\mo Ath$, we obtain a p-morphism from $G^+$ onto
  $H$. So every formula refutable on $H$ is not in
  $(\BK+Ath)\rtt\bS5$.

Adding variable-free axioms   $\sqr^n\bot$ does not affect this argument.
\end{proof}

\section{Product and semiproduct FMP}
In many cases (semi)products enjoy the (semi)product FMP.  In
particular, if $L_1$ is tabular and $L_2$ has the FMP, then
$L_1\ti L_2$ has the product FMP \cite[Cor. 5.9]{p37}. 
Probably, this may not be true, if $L_1$ is only locally tabular.  Examples of
semiproduct FMP can be found in \cite{NPR}, but they do not cover our
next result:

\begin{theorem}\label{Tmain} 
  For $\bLa= \BK+Ath$ and $\bLa=~\BK+\sqr^n\bot+Ath$, the
  (semi)product of $\bLa$ with $\bS5$ has the (semi)product FMP.
\end{theorem}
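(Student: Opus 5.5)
We have to show that any formula $A\notin\bLa\rtt\bS5$ is refuted on a \emph{finite} semiproduct of a finite $\bLa$-frame with a finite cluster, and similarly for products. The plan is to start from a possibly infinite refuting semiproduct and shrink both components, keeping thickness and keeping the $\bS5$-part a cluster. The finite depth case will be handled by local tabularity, and the general case $\BK+Ath$ by a reduction to it.

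\emph{Step 1 (finite depth).} Let $\bLa=\BK+\sqr^n\bot+Ath$. By Proposition \ref{SPM}, $\bLa\rtt\bS5=\bLa\sem$. By Theorem \ref{T81} this logic is locally tabular, hence it has the FMP. So a formula $A$ not in it is refuted on a finite rooted $\bLa\sem$-frame $H$.

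The proof of Proposition \ref{SPM} represents $H$ as a p-morphic image of $G^+$, a semiproduct of a tree-based thick frame $F^+$ and a cluster $C$. Here $F^+$ and $C$ are countable, and possibly infinite, because of the unravelling. The key step is to make them finite. Since $H$ is finite, the p-morphism $f:G^+\to H$ induces only finitely many ``types''. I would choose a finite $C_0\sbq C$ containing witnesses for every $\bsq$-successor type actually realised, say $|H|$ many points per row. I would then prune $F^+$ in a selective-filtration style. Depth is at most $n$, and at each point we keep finitely many successors, one for each $H$-type needed by the back condition of $f$.

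For the pruned frame to stay thick, the kept successor sets must be closed under the Horn condition. Thickness says that all successors of $x$ share their successors. So if we keep for each kept $x$ a single common set of chosen grandchildren, the condition is preserved. After pruning we restrict $f$ and check that it is still a p-morphism onto the generated part containing the refuting point. The same construction with $G=F\times C$ (full product, using Theorem \ref{T28}) gives the product case.

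\emph{Step 2 (general $\BK+Ath$).} Suppose $A$ has modal $\sqr$-depth $d$ and is refuted on a semiproduct $F\rtt C$ at a point $(x_0,c)$. Truncate $F$ to the points reachable from $x_0$ in at most $d$ steps, making the last layer dead ends. Thick frames are preserved under this truncation, since the condition only constrains successors of successors and dead ends satisfy it trivially. Truth of $A$ at $(x_0,c)$ is unchanged, because $\bsq$ stays within rows. This yields a refutation on a $(\BK+\sqr^{d+1}\bot+Ath)$-semiproduct, and Step 1 then makes it finite. The resulting frame is still a $\BK+Ath$-frame, as required. The product case is identical.

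\emph{Main obstacle.} The delicate point is Step 1's pruning. Keeping only finitely many elements of the infinite cluster $C$ must preserve the back condition for $\bsq$ in every row simultaneously. At the same time, the horizontal pruning must keep thickness and the semiproduct requirement $R_h(W)\sbq W$. I would first try choosing $C_0$ uniformly: for each $H$-point and each row, one representative, with a bound of $|H|$. Then I would verify that horizontal successors of kept points can be chosen in the same row set, using that in $G^+$ rows are copies of one cluster. If uniform choice fails, the fallback is to argue directly from local tabularity: take the finite $\bLa\rtt\bS5$-frame $H$ and show it is itself a p-morphic image of a finite semiproduct, by reading off a finite tree of types of bounded depth.
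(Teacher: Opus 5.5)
Your outline matches the paper's up to the last step. You reduce $\BK+Ath$ to the finite-depth logics, then use Proposition \ref{SPM} and Theorem \ref{T81} to obtain a finite rooted $\bLa\rtt\bS5$-frame $H$. What remains is to exhibit $H$ as a p-morphic image of a \emph{finite} (semi)product. That step is the actual content of Theorem \ref{Tmain}, and your proposal does not carry it out.

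\textbf{The pruning never closes.} Cutting $G^+$ down runs into an alternation you do not resolve:
\begin{itemize}
\item The $\bsq$-back condition at the kept points of $F^+$ forces certain $c$ into $C_0$.
\item Each such $c$ creates $\sqr$-back obligations at every kept $(x,c)$, and the required horizontal witnesses $y$ depend on $c$.
\item The new $y$'s need fresh vertical witnesses $c'$.
\item Each $c'$ in turn creates horizontal obligations at every kept $x$ with $(x,c')$ in the semiproduct, including points near the root. In the product case this means every kept $x$.
\end{itemize}
Finite depth bounds the length of $R$-chains in $F^+$, but not the number of rounds of this back-and-forth. Neither your ``uniform'' choice of $C_0$ nor the remark that rows are copies of a cluster shows that the process terminates. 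Your worry about thickness, by contrast, is a non-issue: thickness is a universal condition, so every induced subframe of $F^+$ is automatically thick. Your fallback, ``reading off a finite tree of types'', states the goal but not the idea.

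\textbf{How the paper avoids this.} The paper never passes through an infinite frame. It works directly with a finite rooted $L$-frame $F=(W,R_1,R_2)$, decomposes it into rows, columns and blocks, and shows that $F$ is a p-morphic image of a finite \emph{straight} rooted $L$-frame, i.e.\ one whose blocks are all singletons. A straight frame of this kind is isomorphic to a semiproduct of a $\bLa$-frame with a cluster. This block-splitting construction is the ingredient your proposal is missing.

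\textbf{A smaller repair in Step 2.} Your truncation does not preserve thickness as stated. Suppose $xRy$ and $xRz$, with $y$ at distance $<d$ from $x_0$ and $z$ at distance exactly $d$. Then any $yRu$ forces $zRu$ by thickness, but $z$ has been made a dead end.

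The fix is a layered unravelling. Take points $(x,k)$ where $x$ is reachable from $x_0$ in exactly $k\le d$ steps, and set $(x,k)R(y,k+1)$ iff $xRy$ and $k<d$. This frame is thick and of finite depth. It inherits the semiproduct (or product) structure, and it preserves the truth of $A$ at $((x_0,0),c)$.
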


\begin{corollary}
  For logics $\bLa$ from Theorem \ref{Tmain} $\BQ\bLa-1$ has the FMP,
  i.e., is complete w.r.t. finite Kripke frames with finite domains.
\end{corollary}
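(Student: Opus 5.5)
We need to show: if a formula $A\notin\bLa\ti\bS5$ (resp. $\bLa\rtt\bS5$), then $A$ is refuted in a product (resp. semiproduct) of a \emph{finite} $\bLa$-frame and a \emph{finite} cluster. The plan splits by whether $\bLa$ has finite depth.

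\textbf{Finite depth: $\bLa=\BK+\sqr^n\bot+Ath$.} By Theorem~\ref{T81}, $(\BK+\sqr^n\bot)\sem$ is locally tabular, and hence so is its extension $\bLa\sem$. By Proposition~\ref{SPM}, $\bLa\sem=\bLa\rtt\bS5$, and by Theorem~\ref{T28} ($Ath$ is Horn, and $\bLa$ is Kripke complete by Sahlqvist), $[\bLa,\bS5]=\bLa\ti\bS5$. So suppose $A$ is refuted in a (semi)product $G$ of a $\bLa$-frame $F$ and a cluster $C$. The steps are:
\begin{enumerate}
\item Take a rooted generated subframe containing the refutation. Since $\sqr^n\bot$ holds, $F$ can be taken of depth at most $n$, and we may unravel it into a tree and close it under $Ath$ as in the proof of Proposition~\ref{SPM}.
\item Fix the variables $p_1,\dots,p_k$ of $A$. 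By local tabularity, the points of $G$ fall into finitely many classes $\sim$ of $\bLa\sem$-equivalence for $k$-formulas; concretely, these are classes of $m$-round bisimulation-game equivalence for a suitable $m$.
\item Filter $F$: identify points $u,u'$ of $F$ when the vertical ``columns'' $\{u\}\ti C$ and $\{u'\}\ti C$ realize the same set of $\sim$-types, counted with multiplicity up to a bound. This leaves finitely many points.
\item Shrink $C$ to a finite cluster by keeping, for each realized type, boundedly many witnesses in each column. In the semiproduct case these witnesses must be chosen so that the domain $W$ stays $R_h$-closed.
\end{enumerate}
The result is a product (semiproduct) of a finite frame $F'$ and a finite cluster $C'$ refuting $A$.

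\textbf{Main obstacle.} The hard part is keeping $F'$ thick and keeping the product shape exact, rather than obtaining only a frame of the form $\bLa\sem$. The idea is to make the game-based quotient of $F$ respect $R$ in the ``transitive-like'' sense that $Ath$ demands. Thickness says that all successors of a point's successors are successors of every successor, so the relation is determined at depth-level granularity. One should therefore define $R'$ on classes as ``some representative of $[y]$ has an $R$-successor in $[u]$'' and verify that this is thick. If this fails, a fallback is to apply the Horn closure $F'^+$ afterwards, as in Proposition~\ref{SPM}, and check that the truth of $A$ is preserved via the bisimulation game.

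\textbf{Infinite depth: $\bLa=\BK+Ath$.} Here we reduce to the finite-depth case. A refutation of $A$, with modal depth $d$, at the root of a semiproduct only depends on the part of $F$ within $\sqr$-depth $d$ of the root. Cut the tree $F$ at height $d+1$ and make the cut-off points dead ends. The resulting frame is thick: the condition only involves points at consecutive levels, and truncating a tree closed under $Ath$ keeps it closed. It validates $\sqr^{d+1}\bot$, and $A$ is still refuted, because the vertical modality $\bsq$ does not change heights. Now apply the finite-depth case to get a finite $\BK+Ath$-frame.

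The same plan is expected to work verbatim for products by taking $W=U_1\ti U_2$ throughout.
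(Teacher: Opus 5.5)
Your proposal is aimed at the wrong target. What you set out to prove is that $\bLa\rtt\bS5$ (and $\bLa\ti\bS5$) is determined by (semi)products of finite $\bLa$-frames with finite clusters. That is Theorem~\ref{Tmain} itself, which the corollary may simply cite. The corollary is about $\BQ\bLa\!-\!1$, and your argument never connects to it.

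The missing bridge is the following.
By Proposition~\ref{SPM}, $\bLa\sem=\bLa\rtt\bS5$ for both $\bLa=\BK+Ath$ and $\bLa=\BK+\sqr^n\bot+Ath$. So the chain $\bLa\sem\sbq\BQ\bLa\!-\!1\sbq\overline{\BQ\bLa}\!-\!1=\bLa\rtt\bS5$ from the Preliminaries collapses to $\BQ\bLa\!-\!1=\bLa\rtt\bS5$. By Theorem~\ref{Tmain}, this logic is complete w.r.t.\ finite semiproducts $(F\ti C)|W$. Each of these is the bimodal counterpart of a predicate Kripke frame: take $F$ restricted to the projection of $W$ (which is $R$-upward closed, hence still a $\bLa$-frame), with expanding domains $D_u=\{c\mid (u,c)\in W\}$. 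A 1-variable $A$ is valid in that frame iff $A_*$ is valid in the semiproduct. This is exactly completeness w.r.t.\ finite Kripke frames with finite domains.

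The matching step is essential, not cosmetic. For $\sqr\BT$ the paper gives $\BQ\sqr\BT\!-\!1=\sqr\BT\sem\subsetneq\sqr\BT\rtt\bS5$. Hence $\sqr\fo x(\sqr P(x)\ri P(x))$ is valid in all Kripke frames of $\BQ\sqr\BT$ but not provable in it, so no (semi)product FMP could yield the corollary there. You invoke Proposition~\ref{SPM} only to transfer local tabularity. Also, the product case is irrelevant here: products correspond to $\BQ\bLa\BC$, not $\BQ\bLa$.

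Even as a proof of Theorem~\ref{Tmain}, your steps 3--4 do not work as described. $R_h$ fixes the cluster coordinate. So after identifying $u$ and $u'$ whose columns carry the same multiset of types, each point $([u],c)$ needs a single type consistent with all horizontal moves out of it. But $(u,c)$ and $(u',c)$ generally realize different types, so there is no canonical choice, and re-indexing the cluster separately in each column destroys $R_h$. Thickness of the quotient is also left unchecked.

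The paper argues in the opposite direction. Local tabularity (Theorem~\ref{T81}) yields the FMP, and each finite rooted $L$-frame is then shown to be a p-morphic image of a finite straight frame that is a semiproduct of a $\bLa$-frame and a cluster. Your truncation reduction from $\BK+Ath$ to finite depth is fine. It matches the paper's remark that $(\BK+Ath)\rtt\bS5=\bigcap_n((\BK+\sqr^n\bot+Ath)\rtt\bS5)$.
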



Let us give some comments about the proof of Theorem \ref{Tmain} for
the case of semiproducts. Note that
$(\BK+Ath)\rtt\bS5=\bigcap_n((\BK+\sqr^n\bot+Ath)\rtt\bS5)$, so it
suffices to consider only $L=\bLa\rtt\bS5$ for
$\bLa=\BK+\sqr^n\bot+Ath$ and show that every finite rooted $L$-frame
$F=(W,R_1,R_2)$ is a p-morphic image of a finite semiproduct of a
$\bLa$-frame with a cluster.  A {\em row} in $F$ is a connected
component in $(W,R_1)$; a {\em column} is an equivalence class under
$R_2$; a {\em block} is a non-empty intersection of a row and a
column.  $F$ is {\em straight} if all its blocks are singletons. We
can show that $F$ is a p-morphic image of a straight rooted $L$-frame
isomorphic to a semiproduct of a $\bLa$-frame and a cluster.

\begin{remark}
  We hope our main results can be transferred to extensions of
  ${\bf GL}$. The logic $\bGL\sem$ is the well-known provability logic
  of Artemov--Japaridze, which is semiproduct-matching with $\bS5$. A
  transitive analogue of $Ath$ is R. Solovay's axiom
  $AS:=\sqr(\sqr p\ri\sqr q)\vee\sqr(\sqr q\ri p\we\sqr p)$.  We may
  conjecture that ${\bf SOL}:=\bGL+AS$ (Solovay's logic of
  ``provability w.r.t $\BZ\BF$'' cf.~\cite{Boolos}, ch. 13) is also
  semiproduct-matching with $\bS5$ and that ${\bf SOL}\rtt\bS5$ has
  the semiproduct FMP.
\end{remark}


\end{document}